\documentclass[11pt]{article}
\usepackage[margin=1in]{geometry}
\usepackage{amsmath,amssymb,amsthm}
\usepackage[T1]{fontenc}
\usepackage{lmodern}
\usepackage{microtype}
\usepackage[colorlinks=true,allcolors=blue]{hyperref}
\newtheorem{theorem}{Theorem}
\newtheorem{lemma}[theorem]{Lemma}

\theoremstyle{definition}
\newtheorem{definition}[theorem]{Definition}

\newcommand{\Z}{\mathbb Z}
\newcommand{\N}{\mathbb N_0}
\newcommand{\supp}{\operatorname{supp}}
\newcommand{\nuu}{\nu_2}
\title{Avoiding four-term progressions from finitely many starts}
\author{William Thompson\\
{\small Harvard University}\\
{\small \href{mailto:willthompsonbusiness@gmail.com}{\texttt{willthompsonbusiness@gmail.com}}}}
\date{September 10, 2026}
\hypersetup{
  pdftitle={Avoiding four-term progressions from finitely many starts},
  pdfauthor={William Thompson},
  pdfsubject={Arithmetic progressions in enumerations of the integers}
}
\begin{document}
\maketitle
\begin{abstract}
For every finite set $A\subset\Z$, we construct a bijection $p:\N\to\Z$
beginning with $0,1$ that contains no four-term arithmetic progression,
in occurrence order, whose first value belongs to $A$. More generally,
for each $n\ge2$ the enumeration can begin with
$0,1,3,\ldots,2^{n-1}-1$ and simultaneously avoid every such progression
starting in $A$ or among these first $n$ entries. The construction uses
finitely branching prerequisite relations and an explicit bounded integer
potential. This proves finite prerequisite closure and gives an exhaustive
enumeration, rather than merely a total order. We also give an extension
theorem for finite prefixes with compatible binary-tail constraints.
These results do not determine whether every enumeration of the integers
contains an ordered four-term arithmetic progression.
\end{abstract}

\section{Statement and context}

An \emph{enumeration} of $\Z$ means a bijection $p:\N\to\Z$.
Write $x\prec_p y$ when $p^{-1}(x)<p^{-1}(y)$.
An ordered $\ell$-term arithmetic progression is a sequence
\[
 a,a+d,\ldots,a+(\ell-1)d,
 \qquad d\in\Z\setminus\{0\},
\]
whose terms occur in the displayed order. Both signs of $d$ are included,
and the positions need not be consecutive or themselves form an
arithmetic progression. We call $a$ its \emph{starting value}.

Erd\H{o}s problem~195 asks for the greatest length forced in every
enumeration of $\Z$ \cite{Bloom}. Geneson constructed an enumeration
avoiding ordered six-term progressions \cite{Geneson19}, and Adenwalla
improved this to five-term avoidance \cite{Adenwalla}.
Every enumeration contains an ordered three-term progression: if $a$ is
its first entry and $b$ is its first entry greater than $a$, then
$2b-a>b$ occurs after $b$.
Thus the greatest unavoidable length is either three or four.
Geneson's more recent density result produces four-permutable subsets
with lower symmetric densities arbitrarily close to one \cite{Geneson26};
it does not give a four-free enumeration of all integers.

Finite-closure methods also appear in K\'arolyi and Komj\'ath's construction
of an order of $\mathbb Q$ of type $\omega$ with no ordered six-term
arithmetic progression \cite[Lemma~8]{Karolyi}. Here we keep all integers and restrict the possible starting values.
The following statement also permits a specified family of arbitrarily
long initial segments.

\begin{theorem}\label{thm:main}
Let $A\subset\Z$ be finite and let $n\ge2$.
There is an enumeration $p$ of $\Z$ such that
\[
 p(j)=2^j-1\quad(0\le j<n),
\]
and no ordered four-term arithmetic progression has its starting value
in $A\cup\{2^j-1:0\le j<n\}$.
\end{theorem}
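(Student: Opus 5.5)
We construct $p$ by specifying a strict partial order on $\Z$, the \emph{prerequisite relation}, in which every element has only finitely many predecessors. We then take $p$ to be any linear extension of order type $\omega$. Let $S=A\cup\{2^j-1:0\le j<n\}$. For each $a\in S$ the danger is an ordered progression $a\prec a+d\prec a+2d\prec a+3d$. We will forbid it by ensuring that for every $d\ne0$ with $a\prec a+d$, at least one of $a+3d\prec a+2d$ or $a+2d\prec a+d$ holds. Since $S$ is finite, we first place the prefix $0,1,3,\dots,2^{n-1}-1$, so that $p(j)=2^j-1$. After that we fix, for each $a\in S$ and each integer $x$ appearing after $a$, a rule that decides which later points must precede which.

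The concrete rule is a ``binary-tail'' one. Write $x=a+d$ and let $\nu_2(d)$ be the $2$-adic valuation. We make $x$ a prerequisite of $a+2d$ or of $a+3d$ according to a parity or sign condition on $d$, chosen so that the required reversal is always among the arrows. Typically the rule demands that $a+3d$ precede $a+2d$ whenever $|d|$ is large compared with $\max_{s\in S}|s|$. Elements near $S$, together with the finitely many exceptional $d$, are handled by hand in the initial segment. The prefix values $2^j-1$ are chosen so that their differences $2^j-2^i$ have controlled $2$-adic structure. This makes the progressions starting at prefix elements and running through other prefix elements non-monotone in occurrence order. For example, $0,1,3$ already fails at $0,1,2,3$ because $2$ appears after $3$. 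We check this finite compatibility directly.

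The two properties to verify are acyclicity and finite branching, meaning each $y$ has finitely many prerequisites. Acyclicity we plan to obtain from an explicit integer potential $\Phi:\Z\to\Z$ that strictly increases along every prerequisite arrow. Its natural form is $\Phi(x)=C|x|+\varepsilon(x)$, with $\varepsilon$ bounded and depending on the binary tail of $x-a$ for $a\in S$. Each arrow $a+3d\to a+2d$ goes from a point with larger $|x-a|$ to one with smaller, so $\Phi$ must be decreasing in $|x|$ on such pairs. The cleaner design is therefore to reverse orientation, as follows. We enumerate in order of increasing $\Phi$, with $\Phi$ roughly $|x|$. We then impose only \emph{local} swaps: $a+3d$ is placed just before $a+2d$. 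These pairs are at distance $|d|$, which is comparable to $|x|$, so a purely local swap is impossible. Instead we enumerate by a potential of the form $\Phi(x)=\min_a f(x-a)$ that interleaves scales. Under this potential, points of the form $a+3d$ are met before $a+2d$ for the relevant $d$, while $a+d$ for other $d$ is postponed.

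Finite branching follows once $\Phi$ is bounded below on each level set and has finite level sets. Exhaustiveness is then automatic, since any linear extension of a well-founded, finitely branching partial order on a countable set by $\Phi$-levels enumerates every integer.

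The main obstacle is designing $\Phi$, or equivalently the arrow rule, so that a single consistent choice covers all $a\in S$ simultaneously. Two different starts $a,a'$ can impose contradictory arrows on the same pair $\{a+2d,a+3d\}=\{a'+2d',a'+3d'\}$. My first attempt will be a hierarchy by $2$-adic valuation of $x-a$. Because $S$ is finite, for $|x|$ large the quantities $x-a$ ($a\in S$) share a common dominant scale. Conflicts can then arise only among finitely many $d$ per valuation class, and these are resolved by the bounded correction $\varepsilon$.
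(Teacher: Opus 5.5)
Your proposal has a genuine gap. You correctly isolate the sufficient condition: for each anchor $a$ and each $d$ with $a\prec a+d$, force $a+2d\prec a+d$ or $a+3d\prec a+2d$. But you never fix a rule for which of the two arrows to impose. Nor do you prove that every integer then has only finitely many prerequisites, and that is the whole difficulty. (Making $x=a+d$ a prerequisite of $a+2d$ enforces $a+d\prec a+2d$, which is the wrong direction.)

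The one concrete default you state fails. Since $n\ge2$, both $0$ and $1$ are anchors. If every large $|d|$ gets the arrow $a+3d\to a+2d$, then each large new $c$ has the prerequisite $T(c)=(3c-a)/2$, where $a\in\{0,1\}$ has the parity of $c$. The chain $c<T(c)<T^2(c)<\cdots$ is then an infinite chain of prerequisites, so $c$ is never listed. Always using $a+2d\to a+d$ fails even for a single anchor, via $x\leftarrow 2x-a\leftarrow 4x-3a\leftarrow\cdots$. A potential like $C|x|+\varepsilon(x)$ cannot certify termination either. Both kinds of prerequisite, $2x-a$ and $(3x-a)/2$, lie farther from the anchor than the target, as you half-noticed, and your replacement $\min_a f(x-a)$ is never specified.

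The missing idea is a bounded $2$-adic potential.

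\emph{Setup.} Take $m=2^K$ larger than the diameter of the anchor set $B$, so that distinct anchors have distinct residues mod $m$. Fix one binary-compatible order $\theta$ of the residues mod $m$, with rank function $r$.

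\emph{Arrow rule.} For anchor $a$ and depth $t=\nu_2(b-a)$, impose $2b-a\to b$ or $3b-2a\to 2b-a$. Choose the first when $t<K$ and $a$'s depth-$t$ child is preferred in $\theta$, or when $t\ge K$ and $t-K$ is even; choose the second otherwise.

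\emph{Effect on the potential.} Every low-depth arrow, from every anchor, goes from the preferred child to the other one, so it strictly lowers $r$. Every other arrow keeps $r$ fixed and moves $\nu_2(x-a)-K$ from even to odd, where $a$ is the unique anchor in that residue class. Hence $V=2r+\epsilon$, with $\epsilon$ that parity bit, strictly decreases along arrows and takes values in $\{0,\dots,2m-1\}$. This bounds chain lengths, and finite branching then gives finite closures and an exhaustive enumeration. This is also how the paper resolves the multi-anchor conflict you flag as the main obstacle.

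\emph{The prefix.} It cannot be ``handled by hand.'' Take old pairs $a=2^i-1$, $b=2^j-1$, and set $c=2b-a$, $e=3b-2a$. You must show that $c$ is never a prefix value, since $c+1=2^i(2^{j+1-i}-1)$. You must also build the constraints $e\prec c$ into the same order $\theta$, so that they cannot clash with the other arrows. This is consistent because $c\equiv 2^i-1$ and $e\equiv 2^{i+1}-1\pmod{2^{i+1}}$, so all such requirements ask for the same child at each node.

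Without these ingredients your proposal is an outline of what a proof would need, not a proof.
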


Taking $n=2$ gives avoidance from any prescribed finite set while keeping
the first entries $0,1$. Taking $A=\varnothing$ shows that for every $n$
there is an enumeration in which no four-term progression starts among
the first $n$ entries. The enumeration depends on $A$ and $n$.
Theorem~\ref{thm:main} provides no compatible limiting construction as
these parameters increase.

The length four is optimal in the first assertion: every enumeration
beginning with $0$ contains a three-term progression starting at $0$,
by the first-greater-entry argument above.

\section{Binary orders and finite prefixes}

For $u\ne0$, let $\nuu(u)$ denote the largest $t\ge0$ for which $2^t$ divides
$u$. This definition applies to negative integers as well.
The residue class $r\pmod{2^t}$ has two children modulo $2^{t+1}$.
Choose, independently at each such node, which child precedes the other.
Distinct integers $x,y$ are compared at their first differing binary
digit, of depth $\nuu(x-y)$, using the chosen order of its two children.
We call the resulting order a \emph{binary order}.

This is a total order: any finite set of distinct integers has distinct
residues modulo some $2^K$, and its comparisons are those of the finite
ordered binary tree of depth $K$. These finite orders agree as $K$
increases. No assertion about order type is implicit in this definition.
Parity-recursive orders are standard in arithmetic-progression avoidance;
see Davis et al.\ \cite{Davis} and the explicit integer ordering of
Ardal, Brown and Jungi\'c \cite{Ardal}.

\begin{lemma}\label{lem:binary}
A binary order contains no ordered three-term arithmetic progression.
\end{lemma}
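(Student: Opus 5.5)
Take an arbitrary three-term progression $a,a+d,a+2d$ with $d\ne0$, and let $t=\nuu(d)$. I will show that in any binary order its middle term can never lie strictly between the other two. That rules out both $a\prec a+d\prec a+2d$ and the reverse, which covers both signs of the common difference.

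The plan is to read off each of the three pairwise comparisons from the definition, by finding the depth at which the two integers first differ. The pairs $(a,a+d)$ and $(a+d,a+2d)$ both have difference $d$, so both are compared at depth $t$. At that depth, $a$ and $a+d$ lie in the same residue class $r\pmod{2^t}$. Since $d\equiv 2^t\pmod{2^{t+1}}$, they fall into the two different children of $r$ modulo $2^{t+1}$. Call these children $C_0\ni a$ and $C_1\ni a+d$.

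The same argument applied to $a+d$ and $a+2d$ places them in different children of the same node $r$, because $a+2d\equiv a\pmod{2^t}$. Moreover $a+2d\equiv a+2^{t+1}\equiv a\pmod{2^{t+1}}$, so $a+2d\in C_0$. Thus the comparison of $a$ with $a+d$ and the comparison of $a+2d$ with $a+d$ are both decided by the single chosen order of the children $C_0,C_1$ of $r$.

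It follows that $a$ and $a+2d$ lie on the same side of $a+d$: both precede it if $C_0$ precedes $C_1$, and both follow it otherwise. Hence $a+d$ is never between them, and no ordered three-term progression exists. The comparison of $a$ with $a+2d$, at depth $t+1$ or deeper, is irrelevant to this. The only step needing care is checking that $a+2d$ lands in the same child as $a$, which is the congruence $2d\equiv0\pmod{2^{t+1}}$. It holds uniformly for negative $d$ and negative integers, since $\nuu$ and residues are defined for all of $\Z$.
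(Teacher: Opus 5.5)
Your proof is correct and is the same argument as the paper's: set $t=\nuu(d)$, note that the endpoints $a,a+2d$ lie in one child of the depth-$t$ node and the middle term $a+d$ lies in the other, so that node's single child preference puts the middle term before both endpoints or after both. You make explicit the congruences $d\equiv 2^t$ and $2d\equiv 0 \pmod{2^{t+1}}$, which the paper leaves implicit.
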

\begin{proof}
For $a,a+d,a+2d$, put $t=\nuu(d)$.
The three values agree modulo $2^t$. At depth $t$ the endpoints lie in
one child and the middle value in the other. The middle value therefore
precedes both endpoints or follows both. It cannot occur between them.
\end{proof}

Let $P=(p_0,\ldots,p_{s-1})$ be a nonempty finite word of distinct integers,
and put $S=\supp(P)$. An inequality between elements of $S$ refers to their
order in $P$. Values in $S$ will be called \emph{old}; other values are new.

\begin{definition}\label{def:certificate}
The word $P$ has a \emph{binary-tail certificate} if the following hold.
\begin{enumerate}
\item Whenever $a\prec_P b\prec_P c$ is a three-term arithmetic
progression in $P$, its continuation $2c-b$ belongs to $S$ and precedes $c$.
\item There is a binary order $\triangleleft$ such that, for each old
pair $a\prec_P b$, if
\[
 c=2b-a\notin S,\qquad e=3b-2a\notin S,
\]
then $e\triangleleft c$.
\end{enumerate}
\end{definition}

This is a finite, decidable condition. Each inequality in the second
part prescribes one child preference at the first differing digit of
$c,e$; the condition holds exactly when no node receives opposite
prescriptions. The first part also implies that $P$ itself is four-free.
Indeed, the fourth term of an ordered four-term progression in $P$ would
have to precede its third term.

One motivation for this certificate is that $P$ followed by its new values
in the binary order $\triangleleft$ is a four-free total order. A progression
with at most one old term would leave a three-term progression in the tail;
two old terms violate the second condition; and at least three old terms
violate the first. This total order need not be an enumeration.

\section{An exhaustive extension theorem}

\begin{theorem}\label{thm:extension}
Suppose $P$ has a binary-tail certificate, and let $A\subset\Z$ be finite.
There is an enumeration of $\Z$ beginning with $P$ which avoids every
ordered four-term arithmetic progression whose starting value lies in
$A\cup\supp(P)$.
\end{theorem}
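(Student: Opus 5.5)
The plan is to build the enumeration as an increasing union of finite words $P=Q_0\subset Q_1\subset Q_2\subset\cdots$, where each $Q_{k+1}$ is obtained from $Q_k$ by appending a finite block. Fix $F=A\cup S$ and fix the binary order $\triangleleft$ from the certificate of $P$. Enumerate $\Z$ as $z_0,z_1,\ldots$, say by increasing $|z|$. At stage $k$ the block will contain the first $z_i$ not yet in $Q_k$, together with a finite set of \emph{prerequisites} of $z_i$, listed in $\triangleleft$-order. Every integer is eventually placed, so the limit is a bijection. The real content is to choose the prerequisites so that an invariant modelled on Definition~\ref{def:certificate}, but restricted to starting values in $F$, survives each step.

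The invariant for a word $Q\supseteq P$ is as follows. First, whenever $a\prec_Q a+d\prec_Q a+2d$ with $a\in F$, the value $a+3d$ lies in $Q$ and precedes $a+2d$. Second, whenever $a\in F$ and $a\prec_Q a+d$ with $c=a+2d$ and $e=a+3d$ both outside $Q$, we have $e\triangleleft c$. For $Q=P$ this is exactly the certificate, restricted to $a\in F\cap S$. Starting values $a\in A\setminus S$ are not yet placed, so they impose nothing until they appear.

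Define a prerequisite relation on new values: $e$ is a prerequisite of $y$ if $e\triangleleft y$ and, for some $a\in F$, we have $y=a+2d$ and $e=a+3d$. This relation is finitely branching, since there are at most $|F|$ choices of $a$.

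Appending a $\triangleleft$-ordered, prerequisite-closed block $B$ preserves the invariant, by the same case analysis as in the remark after Definition~\ref{def:certificate}.
\begin{itemize}
\item Inside $B$ the order is a binary order, so Lemma~\ref{lem:binary} rules out $a,a+d,a+2d$ all lying in $B$.
\item If $a$ and $a+d$ are old and $a+2d\in B$, then $a+3d$ is either in $Q$ already, or it is a prerequisite of $a+2d$: indeed $a+3d\triangleleft a+2d$ by the second clause of the invariant, so it lies in $B$ earlier.
\item If $a$ is old and $a+d\in B$, the second clause must be re-established for the pair $(a,a+d)$. The pair $(c,e)$ cannot be fixed by us; it is dictated by the requirement that it be consistent with $\triangleleft$. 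So I would strengthen the prerequisites of $y=a+d$ to include every $e=a+3d$ with $e$ not before $c=a+2d$ in $\triangleleft$. With that addition, either $e$ is placed or $e\triangleleft c$ holds automatically, and in the former case the first clause later forces nothing bad.
\item A newly placed $a\in A\setminus S$ also needs the second clause for its later pairs. I would handle this by making the elements of $A$ prerequisites-free and placing them at the very first stage, immediately after $P$, in $\triangleleft$-order, and checking the clauses directly.
\end{itemize}
Once the invariant holds for every $Q_k$, the limit enumeration has no ordered four-term progression starting in $F$. Given $a\prec a+d\prec a+2d\prec a+3d$, look at the first $Q_k$ containing $a+2d$: the first clause says $a+3d$ is already placed before $a+2d$, a contradiction.

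The main obstacle is finiteness of the prerequisite closure: every $y$ must have only finitely many iterated prerequisites. Along one edge with a fixed $a$ we have $e-a=\tfrac32(y-a)$, so $\nuu(e-a)=\nuu(y-a)-1$, and chains with a single $a$ terminate. Different $a\in F$ can interleave, however, and $\nuu(e-a')$ for another $a'$ may increase. I would first try the bounded integer potential
\[
\Phi(y)=\sum_{a\in F}\min\bigl(\nuu(y-a),M\bigr),
\]
where $M$ exceeds every $\nuu(a-a')$ with $a\ne a'$ in $F$. For $\nuu(y-a)$ large compared with $M$, the valuations $\nuu(y-a')$ for $a'\ne a$ are all frozen at $\nuu(a-a')$. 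So only the one active $a$ can drop, and $\Phi$ strictly decreases along each edge until every $\nuu(y-a)\le M$. In the remaining bounded region, the condition $e\triangleleft y$ at depth $\nuu(y-a)-1$ should rule out cycles: a cycle would require opposite child preferences at a single node of $\triangleleft$. Combining this with finite branching and K\H{o}nig's lemma gives finite closure. Making this potential argument precise, including the extra prerequisites added for second terms, is the step I expect to need the most care.
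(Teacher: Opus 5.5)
The gap is in your third bullet, and it breaks the finiteness step you flagged. Your added prerequisite says that for an anchor $a\in F$ and a new second term $y=a+d$, the value $e=3y-2a$ must be placed with $y$ whenever $c=2y-a\triangleleft e$.

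Put $t=\nu_2(y-a)$. Then $c\equiv a$ and $e\equiv y\equiv a+2^t\pmod{2^{t+1}}$. So the condition $c\triangleleft e$ says exactly that the child containing $a$ is preferred at the depth-$t$ node of $a$; equivalently, $a\triangleleft y$. It depends only on $a$ and $t$.

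Now $e-a=3(y-a)$ has the same valuation $t$, and $e$ is again a new second term over the already placed anchor $a$. So the condition fires again at $e$, and the closure of $y$ contains $a+3^k(y-a)$ for every $k\ge0$. Such depths occur in every binary order once $|F|\ge2$: at depth $\nu_2(a-a')$, one of $a,a'$ lies in the preferred child. Hence some blocks must be infinite, and no potential can repair this, because the chain really is infinite.

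Two smaller points. Acyclicity plus finite branching would not give finite closure anyway, since K\H{o}nig's lemma needs the absence of infinite backward paths. And the same computation shows your $A$-block cannot be checked directly: if $a\triangleleft b$ with $a,b\in A\setminus S$ and $c,e$ new, then automatically $c\triangleleft e$, which violates your second clause.

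The missing idea is what to require when $c\triangleleft e$. In that case the middle term $c$ of the triple $b,c,e$ precedes both endpoints in $\triangleleft$. So the requirement compatible with $\triangleleft$ is $c\longrightarrow b$, which is the paper's arrow \eqref{eq:F}. In the other case you keep $e\longrightarrow c$, which is arrow \eqref{eq:G}. For a fixed anchor, these arrows raise or lower $\nu_2(\cdot-a)$ by one.

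Even so, a binary order that prefers $a$'s child at all large depths still gives infinite chains $a+2^k d$. The paper therefore follows the residue order only modulo $m=2^K>\max B-\min B$. Beyond depth $K$ it alternates the $F/G$ choice along each anchor's path, which is consistent because anchors have distinct residues modulo $m$. Then $V(x)=2r(x)+\epsilon(x)$ strictly decreases along every arrow, so backward paths have at most $2m-1$ edges.

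Each request then appends a finite closure in topological order. Avoidance follows directly from the arrows, with the certificate handling progressions with two or more old terms, and there is no invariant to maintain.
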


\begin{proof}
Set $B=A\cup S$. Choose $K\ge1$ with
\[
 m=2^K>\max B-\min B.
\]
Distinct elements of $B$ then have distinct residues modulo $m$.
Every prescribed old-pair comparison is decided below depth $K$, since
$e-c=b-a$ and $0<|b-a|<m$.
Complete these child preferences to an order $\theta$ of all residues
modulo $m$. Write $r(x)\in\{0,\ldots,m-1\}$ for the rank of the residue
of $x$ in this order, starting at zero.

\smallskip\noindent\textbf{Prerequisites.}
For each $a\in B$ and each $t\ge0$, choose one of two symbols $F_a(t)$
and $G_a(t)$. For $t<K$, choose $F_a(t)$ exactly when the child containing
$a$ is preferred at its depth-$t$ node in $\theta$; otherwise choose $G_a(t)$.
For $t\ge K$, choose $F_a(t)$ when $t-K$ is even, and $G_a(t)$ when it is odd.

For every target $x\notin S$ and every $a\in B$ with $x\ne a$, impose
the following prerequisites when their stated conditions hold:
\begin{align}
 2x-a&\longrightarrow x
 &&\text{if }F_a(\nuu(x-a))\text{ is chosen};\label{eq:F}\\
 \frac{3x-a}{2}&\longrightarrow x
 &&\text{if }2\mid(x-a)\text{ and }G_a(\nuu(x-a)-1)\text{ is chosen}.
 \label{eq:G}
\end{align}
An arrow $y\longrightarrow x$ means that $y$ must be listed before $x$.
Also impose $e\longrightarrow c$ for each old-pair comparison in
Definition~\ref{def:certificate}. There are no prerequisites targeting $S$.
Each target has finitely many immediate prerequisites, at most
$D=2|B|+\binom{s}{2}$.

\smallskip\noindent\textbf{A decreasing potential.}
First consider an arrow $y\longrightarrow x$ of type~\eqref{eq:F},
and put $v=\nuu(x-a)$. If $v<K$, then $y$ and $a$ are in the same
depth-$v$ child, while $x$ is in the other child. Choice $F$ makes
the former child preferred, so $r(y)<r(x)$. If $v\ge K$, then
$r(y)=r(x)$ and
\[
 \nuu(y-a)=v+1.
\]
For an arrow of type~\eqref{eq:G}, let $v=\nuu(x-a)\ge1$.
At depth $v-1$, the values $x$ and $a$ are in the same child and $y$
is in the other. If $v-1<K$, choice $G$ makes the other child preferred,
so again $r(y)<r(x)$. Otherwise $r(y)=r(x)$ and
\[
 \nuu(y-a)=v-1,
\]
because $y-a=3(x-a)/2$.
Finally, every old-pair arrow strictly decreases rank by the choice of
$\theta$ and the distinct residues of its endpoints.

Define $\epsilon(x)=1$ precisely when there is an $a\in B$ with
$x\equiv a\pmod m$, $x\ne a$, and $\nuu(x-a)-K$ even;
otherwise set $\epsilon(x)=0$. Such an anchor $a$ is unique when it exists.
Set
\[
 V(x)=2r(x)+\epsilon(x)\in\{0,\ldots,2m-1\}.
\]
Every arrow $y\longrightarrow x$ satisfies $V(y)<V(x)$.
This follows immediately if $r(y)<r(x)$.
For an equal-rank arrow of type~\eqref{eq:F}, the relative valuation
$v-K$ is even at $x$ and becomes odd at $y$.
For an equal-rank arrow of type~\eqref{eq:G}, choice $G$ means
$v-1-K$ is odd, so $v-K$ is even at $x$ and $v-1-K$ is odd at $y$.
Thus in both cases $\epsilon$ decreases from one to zero. The anchor
used in this calculation is the unique anchor in that residue class.

It follows that a path followed backwards from target to prerequisite
has at most $2m-1$ edges. Since each vertex has at most $D$ immediate
prerequisites, the prerequisite closure of any vertex is finite
(of size at most $\sum_{j=0}^{2m-1}D^j$). The graph is also acyclic.
This argument includes the boundary $v=K$ in~\eqref{eq:G}: its comparison
is decided at depth $K-1$ and strictly decreases rank.

\smallskip\noindent\textbf{Enumeration.}
Start with $P$ and request targets successively in the order
$0,1,-1,2,-2,\ldots$. For each missing target, append its still missing
prerequisite closure, including that target, in a topological order.
This is possible because the closure is finite and acyclic. At every
stage the listed set is closed under prerequisites: this is initially
true because old values have no prerequisites, and appending a closure
preserves it. Thus every arrow is respected. Every stage is finite,
no value is repeated, and every integer is eventually requested.
The result is an enumeration of all $\Z$ extending $P$.

\smallskip\noindent\textbf{Avoidance.}
Suppose, for a contradiction, that $a,b,c,e$ is an ordered four-term
progression with $a\in B$. If $b\notin S$, then $c,e\notin S$ as well,
since all old values occur in the initial prefix. Put $t=\nuu(b-a)$.
Choice $F_a(t)$ supplies $c\longrightarrow b$ via~\eqref{eq:F}.
Choice $G_a(t)$ supplies $e\longrightarrow c$ via~\eqref{eq:G}.
Either contradicts the progression's occurrence order.

If $b\in S$, then also $a\in S$, since $a$ occurs before $b$.
If $c\in S$, the first certificate condition puts $e$ before $c$.
If $c\notin S$ but $e\in S$, the same reversal holds because $e$ is old.
If $c,e\notin S$, their old-pair prerequisite puts $e$ before $c$.
These cases exhaust all possibilities. The valuation identities and
child comparisons used throughout hold for either sign of the difference.
\end{proof}

\section{Prescribed initial positions}

\begin{proof}[Proof of Theorem~\ref{thm:main}]
Let $P_n=(2^j-1)_{j=0}^{n-1}$.
We verify its binary-tail certificate and apply
Theorem~\ref{thm:extension} with the given set $A$.

For an old pair $a=2^i-1$, $b=2^j-1$ with $i<j$, its third continuation
$c=2b-a$ satisfies
\[
 c+1=2^i(2^{j+1-i}-1).
\]
The factor in parentheses is odd and greater than one, so $c+1$ is not a
power of two. Thus $P_n$ contains no ordered three-term progression,
and the first certificate condition is vacuous.

For the fourth continuation $e=3b-2a$, direct reduction gives
\[
 c\equiv2^i-1\pmod{2^{i+1}},\qquad
 e\equiv2^{i+1}-1\pmod{2^{i+1}}.
\]
Consequently $c,e$ first differ at depth $i$, at the node
$2^i-1\pmod{2^i}$, and their bit-$i$ values are respectively zero and one.
Every unresolved comparison $e\triangleleft c$ therefore requires the
one-child first at that node. All requirements agree, so the second
certificate condition holds as well.
\end{proof}

The finite bounds in the prerequisite construction do not supply a
four-free enumeration when all starting values are protected at once.
The modulus, residue order, and resulting positions can change when
the finite anchor set changes. In particular, neither a limiting
enumeration nor bounds on individual positions that are uniform as
$A$ and $n$ increase follow from Theorem~\ref{thm:main}. The question of four-term avoidance
on all of $\Z$ remains open.

\newpage
\section*{Computational supplement and use of AI}

The supplementary Python program implements the prerequisite construction
with exact integer arithmetic and independently checks finite instances
of the potential inequalities, prefix conditions, and rooted avoidance.
These computations are consistency checks; the proofs of termination,
coverage, and avoidance are the symbolic arguments above.

The author directed the research and developed the results presented here
with assistance from GPT-6 Astra. The author designed structured prompts
to guide the search for constructions, test proposed mechanisms, and
challenge the arguments, and personally reviewed and verified every proof
in this manuscript.

Astra assisted with the development of the construction and proofs,
verification code, literature research, and manuscript preparation.
Additional Astra runs were used to reconstruct and challenge the arguments
and to produce a separate verification implementation. These computational
and AI checks supplemented the author's mathematical review; they do not
constitute external peer review or formal proof-assistant verification.
The author takes responsibility for all statements, references, and
computations reported here.


\begin{thebibliography}{99}
\bibitem{Adenwalla}
S.~Adenwalla, \emph{Avoiding monotone arithmetic progressions in permutations
of integers}, Discrete Mathematics \textbf{347} (2024), no.~11, 114183.
\href{https://doi.org/10.1016/j.disc.2024.114183}{doi:10.1016/j.disc.2024.114183}.
\bibitem{Ardal}
H.~Ardal, T.~Brown and V.~Jungi\'c, \emph{Chaotic orderings of the rationals
and reals}, American Mathematical Monthly \textbf{118} (2011), no.~10,
921--925. \href{https://doi.org/10.4169/amer.math.monthly.118.10.921}{doi:10.4169/amer.math.monthly.118.10.921}.
\bibitem{Bloom}
T.~F.~Bloom, \emph{Erd\H{o}s Problem \#195},
\url{https://www.erdosproblems.com/195}.
\bibitem{Davis}
J.~A.~Davis, R.~C.~Entringer, R.~L.~Graham and G.~J.~Simmons,
\emph{On permutations containing no long arithmetic progressions},
Acta Arithmetica \textbf{34} (1977), no.~1, 81--90.
\href{https://doi.org/10.4064/aa-34-1-81-90}{doi:10.4064/aa-34-1-81-90}.
\bibitem{Geneson19}
J.~Geneson, \emph{Forbidden arithmetic progressions in permutations of
subsets of the integers}, Discrete Mathematics \textbf{342} (2019), no.~5,
1489--1491. \href{https://doi.org/10.1016/j.disc.2019.02.004}{doi:10.1016/j.disc.2019.02.004}.
\bibitem{Geneson26}
J.~Geneson, \emph{Density bounds for permutations avoiding monotone
arithmetic progressions},
\href{https://arxiv.org/abs/2608.12604}{arXiv:2608.12604} (2026).
\bibitem{Karolyi}
G.~K\'arolyi and P.~Komj\'ath, \emph{Well ordering groups with no monotone
arithmetic progressions}, Order \textbf{34} (2017), 299--306.
\href{https://doi.org/10.1007/s11083-016-9400-5}{doi:10.1007/s11083-016-9400-5}.
\end{thebibliography}
\end{document}